\newtheorem{thm}{Theorem}[section]
\newtheorem{lem}[thm]{Lemma}
\newtheorem{prop}[thm]{Proposition}
\theoremstyle{definition}
\newtheorem{defn}[thm]{Definition}
\theoremstyle{remark}
\newtheorem{rem}[thm]{Remark}
\newtheorem{ex}[thm]{Example}
\def\Darr#1#2{{\scriptstyle #1}\downarrow{\scriptstyle #2}}
\def\RDarr#1#2{{\scriptstyle #1} \searrow {\!\!\!\! {}^{#2}}}
\long\def\Ref#1#2#3#4#5#6{
\bibitem{#1}
{\rm #2,}
\textit{#3.}
{\rm #4}
\textbf{#5}
{\rm #6.}
}
\long\def\Refb#1#2#3#4{
\bibitem{#1}
{\rm #2,}
\textit{#3.}
#4.
}
\def\Zz{{\mathbb Z}}
\def\Cc{{\mathbb C}}
\def\Ff{{\mathbb F}}
\def\into{\hookrightarrow}
\def\iso{\cong}
\def\st{\mid}
\def\Sq{{\rm Sq}}
\def\Sp{{\rm Sp}}
\def\Sing{\Sigma}
\def\phi{\varphi}
\def\leq{\leqslant}
\def\geq{\geqslant}
\begin{document}

\title{A Poincar\'e-Hopf theorem for n-valued \goodbreak\noindent
vector fields}
\author{M.~C.~Crabb}
\address{%
Institute of Mathematics,\\
University of Aberdeen, \\
Aberdeen AB24 3UE, UK}
\email{m.crabb@abdn.ac.uk}
\begin{abstract}
The Poincar\'e-Hopf theorem for line fields, as described in a paper
of Crowley and Grant \cite{CG}, is interpreted as a special case of a
Poincar\'e-Hopf theorem for $n$-valued sections of a vector bundle over
a closed manifold of the same dimension.
\end{abstract}
\subjclass[2010]{Primary 
55M25, 
57R25  
Secondary
55M20, 
57R22  
} 
\keywords{Poincar\'e-Hopf, vector bundle, multivalued map}
\date{February 2025}
\maketitle
\section{Introduction}
In \cite{CG} Crowley and Grant established a Poincar\'e-Hopf
theorem for line fields on a closed smooth $m$-manifold $M$.
A line field is a section, over some subspace of $M$,
of the real projective bundle $P(\tau M)$ of the tangent bundle $\tau M$.
More generally, one can consider sections of the projective bundle
$P(\xi )$ of any $m$-dimensional real vector bundle $\xi$ over $M$.
In this formulation, 
it is easy to deduce a Poincar\'e-Hopf theorem for projective
bundles by reduction to the classical case of sections of a sphere bundle.
This reduction, by methods that are essentially well known (but perhaps
not so familiar), is described in Section 2. 
It involves separate consideration of the cases in which $m$ is equal to
$2$ or is greater than $2$.
An analogous result for complex projective bundles is given in Section 3.

But the main result \cite[Theorem 1.1]{CG} of Crowley and Grant,
giving a uniform treatment of the cases $m=2$ and $m>2$, suggests
another generalization.
We assume that $\xi$ is equipped with a Euclidean inner product.
A section of a projective bundle $P(\xi )$ then determines, at each point $x$
of the base,
a $2$-element subset of the unit sphere $S(\xi_x)$ in the fibre $\xi_x$ of
$\xi$ at $x$,
namely the two points lying on the line in $\xi_x$ specified by the section.
Now, for any natural number $n\geq 1$, one could look at sections of
a suitable configuration space bundle determining at $x$ a subset of $S(\xi_x)$,
or just of $\xi_x-\{ 0\}$, with cardinality $n$.
Our main result, Theorem \ref{main}, is a direct extension of the classical
Poincar\'e-Hopf theorem for sections of a vector bundle to such $n$-valued
sections of $\xi$.
The definition of the local index of an $n$-valued section depends
on a degree theory for $n$-valued maps, which is described in 
Section 4. 
The methods derive from the work of Schirmer \cite{schirmer}
and Brown \cite{brown} on the fixed point theory of $n$-valued maps.

Throughout this paper,
$M$ will be a closed, connected, smooth manifold of dimension $m\geq 1$,
with orientation bundle denoted by $\delta$.
We recall the classical Poincar\'e-Hopf theorem
for an $m$-dimensional vector bundle  $\xi$ over $M$.
In the statement $e(\xi )$ is the Euler class of $\xi$ with 
$\Zz$-coefficients twisted by $\delta$. 
Note that, if $m$ is odd, then $2e(\xi )=0$ and so $e(\xi )[M]=0$.
\begin{thm} \label{poincare}
{\rm (Poincar\'e-Hopf).}
Let $\xi$ be an $m$-dimensional real vector bundle over $M$. Suppose
that $s$ is a section of $\xi$ which is non-zero outside a finite set $\Sing$.
\par\noindent {\rm (a).}
One can assign to each point of $\Sing$
a local index of $s$ in $\Zz /2\Zz$, and the sum of the
local indices is equal to $w_m(\xi )[M]\in\Zz /2\Zz$.
\par\noindent {\rm (b).}
Suppose that $w_1(\xi )=w_1(M)$ and an isomorphism is chosen between
$\delta$ and the orientation bundle of $\xi$.
Then one can assign to each point of $\Sing$ a local index of $s$ in $\Zz$, and the
sum of the local indices is equal to $e(\xi )[M]\in\Zz$.
\qed
\end{thm}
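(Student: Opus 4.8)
The plan is to produce the local index as a degree and then to identify $w_m(\xi)$ --- respectively $e(\xi)$ --- with the assembly of these local contributions through a relative Euler class, evaluated against the fundamental class of $M$. \emph{Definition of the local index.} For each $x\in\Sing$ choose a closed coordinate disc $D_x\subseteq M$ with $x$ in its interior, small enough that the discs $D_x$ are pairwise disjoint and $s$ is non-zero on $D_x\setminus\{x\}$, together with a trivialization $\xi|_{D_x}\iso D_x\times\Rr^m$. The normalized section $y\mapsto s(y)/|s(y)|$ then restricts to a map $\partial D_x\to S^{m-1}$; set $\mathrm{ind}_x(s)\in\Zz/2\Zz$ equal to its mod $2$ degree in case (a), and, once orientations of $\partial D_x$ and of $S^{m-1}$ have been fixed, equal to its integer degree in case (b). Independence of the choice of disc is clear, since two such spheres are homotopic through spheres on which $s$ is non-zero; independence of the trivialization holds automatically mod $2$, and integrally once the orientation conventions of the next sentence are in force. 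In case (b) these orientations come from the hypothesis: as $w_1(\xi)=w_1(M)$, the orientation $\Zz/2\Zz$-bundle of $\xi$ is $\delta$, and the chosen isomorphism lets us orient the fibre sphere of $S(\xi)$ over $x$ compatibly with the orientation of $T_xM$, hence of $D_x$, that $\delta$ determines there; this compatibility is exactly what makes the integer degree well defined and what fixes the sign of $\mathrm{ind}_x(s)$.

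\emph{The relative Euler class.} Put $M_0=M\setminus\bigsqcup_{x\in\Sing}\mathrm{int}\,D_x$. Over $M_0$ the map $s/|s|$ is a section of the unit sphere bundle $S(\xi)$. Since the fibre $S^{m-1}$ is $(m-2)$-connected with $\pi_{m-1}(S^{m-1})\iso\Zz$ --- the action of $\pi_1(M)$ on which is governed by $w_1(\xi)$ --- obstruction theory yields a relative class $\bar e\in H^m(M,M_0;\Zz_\delta)$ whose restriction in $H^m(M;\Zz_\delta)$ is $e(\xi)$, and likewise a relative class $\bar w_m\in H^m(M,M_0;\Zz/2\Zz)$ restricting to $w_m(\xi)$. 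By excision $H^m(M,M_0;\Zz_\delta)\iso\bigoplus_{x\in\Sing}H^m(D_x,\partial D_x;\Zz_\delta)\iso\bigoplus_{x\in\Sing}\Zz$, the last step using the local orientation data of case (b); under this identification the $x$-component of $\bar e$ is exactly $\mathrm{ind}_x(s)$, since both measure the obstruction to extending $s/|s|$ from $\partial D_x$ across $D_x$. The same description holds mod $2$, requiring no orientations.

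\emph{Evaluation.} Let $[M]\in H_m(M;\Zz_\delta)$ be the $\delta$-twisted fundamental class, and $[M]_2$ its mod $2$ reduction. Its image $[M]_0\in H_m(M,M_0;\Zz_\delta)$ under the natural map corresponds, via excision, to $\sum_{x\in\Sing}[D_x,\partial D_x]$, each $[D_x,\partial D_x]$ being the local fundamental class singled out by the orientation of $M$ at $x$. Naturality of the Kronecker pairing then gives
\[
 e(\xi)[M]=\langle\bar e,[M]_0\rangle=\sum_{x\in\Sing}\langle\bar e|_{(D_x,\partial D_x)},[D_x,\partial D_x]\rangle=\sum_{x\in\Sing}\mathrm{ind}_x(s),
\]
which is (b); running the same argument mod $2$ with $[M]_2$ in place of $[M]$ gives (a).

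The step I expect to be the main obstacle is the orientation bookkeeping in (b): one must verify that the isomorphism between $\delta$ and the orientation bundle of $\xi$ really does render the local integer degree well defined --- not merely up to sign --- and that the signs arising in the excision isomorphism, in the coboundary of obstruction theory, and in the cap product with $[M]$ are mutually consistent; equivalently, that the $\delta$-twisted Euler class and the $\delta$-twisted fundamental class are normalized compatibly. Once all conventions are fixed coherently --- for instance by pulling everything back to the orientable double cover, or by carrying local orientations throughout --- the argument goes through, and case (a) requires none of this care.
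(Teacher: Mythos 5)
The paper does not prove this statement at all: it is quoted as the classical Poincar\'e--Hopf theorem (note the \verb|\qed| placed directly after the statement), and the surrounding text supplies only the definition of the local index, which agrees with yours. Your obstruction-theoretic argument is the standard proof of the classical result and is sound: the normalized section gives a section of $S(\xi)$ over $M_0$, the relative primary obstruction in $H^m(M,M_0)$ with coefficients in $\pi_{m-1}(S^{m-1})$ twisted by $w_1(\xi)$ restricts to $e(\xi)$ (respectively, its mod $2$ reduction $w_m(\xi)$), excision identifies its components with the local degrees, and evaluation against the (twisted) fundamental class assembles the sum. The one point that genuinely needs care, and which you correctly flag, is the normalization in case (b): the integer degree of $\phi_x$ is only well defined once the trivialization of $\xi|_{D_x}$ is pinned down --- the paper does this by requiring it to restrict to the identity on $\xi_x$ at $x$ --- and once the orientation of the fibre sphere $S(\xi_x)$ is tied to the local orientation of $\tau_xM$ via the chosen isomorphism of $\delta$ with the orientation bundle of $\xi$; with that convention the sign ambiguity you mention disappears, and the twisted Euler class and twisted fundamental class are paired consistently. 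Case (a) needs none of this, as you say.
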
 

The local index at a point $x\in \Sing$ is defined as follows. 
Fixing a Riemannian metric on $M$ so that we can talk about the closed
unit disc $D(\tau_xM)$ in the tangent space $\tau_xM$ at $x$,
we choose a small
tubular neighbourhood $D(\tau_xM) \into M$ of $x$, containing no other
point of $\Sing$, and a trivialization $\xi \, |\, D(\tau_xM) \iso D(\tau_x M) \times
\xi_x$ of $\xi$, restricting to the identity on $\xi_x$ at $x$,
over $D(\tau_xM)$.
Then $s$ determines, over the bounding sphere $S(\tau_xM)=\partial D(\tau_xM)$,
a map $\phi_x : S(\tau_x M) \to \xi_x-\{ 0\}$.
The homotopy class of $\phi_x$ is independent of the choices made in its
construction. We define the local index at $x$ to be
the degree of $\phi_x$ in case (b) using the correspondence between
the orientations of $\tau_xM$ and $\xi_x$, 
and the reduction of the degree (mod $2$) in case (a).
\section{Sections of real projective bundles}
In this section we explain how to reduce the study of sections
a real projective bundle to consideration of sections of a 
suitable sphere bundle.
We shall write $H$ for the (dual) Hopf line bundle over $P(\eta )$
constructed as a subbundle of (the pullback of) $\eta$.
\begin{prop}
Suppose that $\eta$ is an $m$-dimensional real vector bundle over
$M$ and that $\sigma$ is a section of the real projective bundle
$P(\eta )$ over the complement of a finite set $\Sing$.
\par\noindent {\rm (a).}
Suppose that $m>2$. Then $\sigma$ determines a class $l\in H^1(M;\,\Ff_2)$.
One can assign a local index of $\sigma$ in $\Zz /2\Zz$ to each point of $\Sing$, and
the sum of the local indices is equal to $(\sum_{i=0}^m l^iw_{m-i}(\eta ))[M]
\in\Zz /2\Zz$.
\par\noindent {\rm (b).}
Suppose that $m>2$ is even, that $w_1(\eta )=w_1(M)$ and that
an isomorphism  is chosen between $\delta$ and the orientation bundle
of $\eta$. Then one can assign to each point of $\Sing$ a local index  of $\sigma$ in $\Zz$,
and the sum of the local indices is equal to $e(\eta )[M]\in\Zz$.
\par\noindent {\rm (c).}
Suppose that $m=2$, that $w_1(\eta )=w_1(M)$ and that
an isomorphism  is chosen between $\delta$ and the orientation bundle
of $\eta$. Then one can assign to each point of $\Sing$ a local index of $\sigma$ in $\Zz$,
and the sum of the local indices is equal to $2e(\eta )[M]\in\Zz$.
\par\noindent {\rm (d).}
Suppose that $m=2$. Let $\Sing_+$ be the set of points $x\in \Sing$ such that
the pullback $\sigma^*H$ over $M-\Sing$ of the Hopf line
bundle $H$ over $P(\eta)$ extends over $(M-\Sing)\cup\{ x\}$, and write $\Sing_-=\Sing -\Sing_+$.
Then $\# \Sing_-$ is even.
If $\Sing_-=\emptyset$, so that $\sigma^*H$ extends
to a line bundle over $M$, classified by $l\in H^1(M;\,\Ff_2)$ say,
one can assign to each point of $\Sing$ a local index 
of $\sigma$ in $\Zz /2\Zz$
such that the sum of the local indices is equal to 
$(w_2(\eta)+lw_1(\eta )+l^2)[M]\in \Zz /2\Zz$.
\end{prop}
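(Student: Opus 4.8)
The plan is to deduce all four parts from the classical Poincar\'e--Hopf Theorem~\ref{poincare} by trading the section $\sigma$ for a genuine section of a sphere bundle. Write $M_0=M-\Sing$. Using the Euclidean structure and the tautological inclusion $H\into\eta$ over $P(\eta)$, a section $\sigma$ of $P(\eta)$ over $M_0$ is exactly a real line subbundle $L=\sigma^{*}H$ of $\eta\,|\,M_0$, with orthogonal complement $L^{\perp}$, so $\eta\,|\,M_0=L\oplus L^{\perp}$. Near a point $x\in\Sing$, after trivializing $\eta$ over $D(\tau_xM)$, the section $\sigma$ restricts over $S(\tau_xM)$ to a map $S^{m-1}\to P(\eta_x)=\Rr P^{m-1}$ whose homotopy class is independent of the choices. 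For $m>2$ this lifts through the double cover $S^{m-1}\to\Rr P^{m-1}$ to a map into $S(\eta_x)$, uniquely up to the antipodal deck transformation, and I would define the local index of $\sigma$ at $x$ to be the degree of this lift --- in $\Zz/2\Zz$ in general and, since the antipodal map of $S^{m-1}$ has degree $(-1)^m$, in $\Zz$ when $m$ is even. For $m=2$ the lift over the punctured disc exists exactly when the degree $d$ of $\sigma\,|\,S^1\colon S^1\to\Rr P^1$ is even, equivalently when $L$ extends over $x$; this is the dichotomy $\Sing_{+}$ versus $\Sing_{-}$ of part~(d).

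Assume first $m>2$. Then $H^1(M_0;\Ff_2)\iso H^1(M;\Ff_2)$, so $w_1(L)$ is the restriction of a unique $l\in H^1(M;\Ff_2)$; I pick a line bundle $\lambda$ over $M$ with $w_1(\lambda)=l$ and an isomorphism $\lambda\,|\,M_0\iso L$. Over $M_0$ the summand $L\otimes\lambda$ of $\eta\otimes\lambda$ is canonically trivial ($L\otimes\lambda\iso\lambda\otimes\lambda\iso\underline\Rr$ via the metric), giving a section $s$ of $\eta\otimes\lambda$ that is nonzero outside $\Sing$; since $L$ and $\lambda$ are trivial on each punctured disc about a point of $\Sing$, near such a point $s$ agrees with one of the two unit sections of $L$, so the Poincar\'e--Hopf index of $s$ there equals the local index of $\sigma$. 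Applying Theorem~\ref{poincare}(a) to $s$ and using $w_m(\eta\otimes\lambda)=\sum_{i=0}^{m}l^{i}w_{m-i}(\eta)$ proves~(a). For~(b), $m$ being even forces $w_1(\eta\otimes\lambda)=w_1(\eta)=w_1(M)$; here I would rather pass to the double cover $p\colon\widetilde M\to M$ classified by $l$ (the trivial double cover if $l=0$). On $\widetilde M_0$ the bundle $p^{*}L$ is trivial, so $\sigma$ lifts to a vector field $\widetilde s$ in $p^{*}\eta$ nonzero outside $p^{-1}(\Sing)$, with Poincar\'e--Hopf index at each of the two preimages of $x$ equal to the local index of $\sigma$ at $x$; Theorem~\ref{poincare}(b) on $\widetilde M$ then reads $2\sum_{x\in\Sing}\mathrm{ind}_x(\sigma)=e(p^{*}\eta)[\widetilde M]=2\,e(\eta)[M]$, and dividing by $2$ in $\Zz$ gives~(b).

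Now assume $m=2$. For the parity statement in~(d), look at the cohomology exact sequence of the pair $(M,M_0)$: excision gives $H^2(M,M_0;\Ff_2)\iso\bigoplus_{x\in\Sing}H^2(D_x,D_x-\{x\};\Ff_2)=\bigoplus_{x\in\Sing}\Ff_2$, the map to $H^2(M;\Ff_2)=\Ff_2$ is summation, and the coboundary sends $w_1(L)\in H^1(M_0;\Ff_2)$ to the tuple whose $x$-component is $1$ precisely for $x\in\Sing_{-}$ (the coboundary $H^1(S^1;\Ff_2)\to H^2(D^2,S^1;\Ff_2)$ being an isomorphism); exactness forces $\#\Sing_{-}$ even. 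If $\Sing_{-}=\emptyset$, then $L$ extends to a line bundle $\lambda$ over $M$ with $w_1(\lambda)=l$, and the argument of the previous paragraph applied to $\eta\otimes\lambda$, with Theorem~\ref{poincare}(a) and $w_2(\eta\otimes\lambda)=w_2(\eta)+l\,w_1(\eta)+l^{2}$, yields the stated value in $\Zz/2\Zz$ (here the local index of $\sigma$ at $x$ is $d/2\bmod 2$). For~(c) I would exploit that $P(\eta)$ is itself a sphere bundle: let $\eta^{(2)}$ be the rank-two real bundle of trace-free self-adjoint endomorphisms of $\eta$. Sending a line $\ell\subset\eta_x$ to the endomorphism $P_{\ell}-P_{\ell^{\perp}}$ identifies $P(\eta)$ with $S(\eta^{(2)})$; moreover $w_1(\eta^{(2)})=w_1(\eta)$ with the same orientation bundle, and $e(\eta^{(2)})=2\,e(\eta)$ (pulled back to the orientation double cover, $\eta$ is a complex line bundle and $\eta^{(2)}$ its square). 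Thus $\sigma$ is literally a section of $\eta^{(2)}$ over $M_0$, and Theorem~\ref{poincare}(b) --- applicable since $w_1(\eta^{(2)})=w_1(M)$ and the orientation datum transports --- gives $\sum_{x\in\Sing}\mathrm{ind}_x(\sigma)=e(\eta^{(2)})[M]=2\,e(\eta)[M]$.

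The step that will take the most care is the bookkeeping identifying the intrinsically defined local index of $\sigma$ --- the degree of the lift of $\sigma\,|\,S(\tau_xM)$ through $S^{m-1}\to\Rr P^{m-1}$, or its $m=2$ analogue --- with the Poincar\'e--Hopf index of whichever sphere-bundle section is manufactured, uniformly in the auxiliary choices of $\lambda$, of trivializations, and of the double cover; this rests on the elementary fact that a line field on $D^m-\{0\}$ lifts to a unit vector field there whenever $m\geq 3$ (and whenever $m=2$ with even rotation number). The other slightly delicate point is the $m=2$ identification $S(\eta^{(2)})\iso P(\eta)$ together with the twisted-integral computation $e(\eta^{(2)})=2\,e(\eta)$; granted these, all four parts follow from Theorem~\ref{poincare}.
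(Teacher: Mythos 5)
Your proof is correct and follows essentially the same route as the paper: extend $\sigma^*H$ to a line bundle $\lambda$ over $M$ (for $m>2$, and in case (d) when $\Sing_-=\emptyset$), convert $\sigma$ into a nowhere-zero section of $\lambda\otimes\eta$ and apply Theorem~\ref{poincare} with the same Whitney-class computation; identify $P(\eta)$ with the sphere bundle of a rank-two bundle of Euler class $2e(\eta)$ for (c); and use the same cohomology exact sequence for the parity claim in (d). Your only departures are cosmetic: in (b) you pass to the double cover classified by $l$ and divide by $2$, where the paper evaluates $e(\lambda^*\otimes\eta)[M]=e(\eta)[M]$ directly (no $2$-torsion in top degree), and in (c) you realize the rank-two bundle as the trace-free self-adjoint endomorphisms of $\eta$ rather than as the twisted tensor square $\eta\otimes_{\Cc_\delta}\eta$ --- isomorphic descriptions of the same reduction.
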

\begin{proof}
(a) and (b). 
These cases are considered in some detail in \cite{koschorke}
(but see, especially, \cite[Remark 1.9]{koschorke}).

Since $\dim M >2$,
the pullback $\sigma^*H$ over $M-\Sing$ of $H$ over $P(\eta )$
extends uniquely (up to homotopy) to a line bundle $\lambda$ over
$M$. The section $\sigma$ determines a tautological section $s$ of
$\lambda^*\otimes \eta$ given by the inclusion of $\lambda$ in $\eta$
(pulled back from the defining inclusion of $H$ in $\eta$).

We now apply Theorem \ref{poincare} to the section $s$ of 
$\xi = \lambda^*\otimes\eta$ over $M-\Sing$.
Let $l=w_1(\lambda )$.
Then we have $w_m(\lambda^*\otimes\eta ) =\sum l^iw_{m-i}(\eta )$
and
in case (b), using the natural isomorphism between the orientation
bundles of $\lambda^*\otimes\xi$ and $\xi$, we have
$e(\lambda^*\otimes\eta )[M]=e(\eta )[M]\in\Zz$,
because there is no $2$-torsion.

(c).  Let $\Cc_\delta$ denote the bundle of complex fields twisted by
$\delta$. (See, for example, \cite{CCV}.)
Then $\eta$ is a $\Cc_\delta$-line and we can identify
$P(\eta )$ with the sphere bundle $S(\eta\otimes_{\Cc_\delta}\eta )$
of the square of $\eta$. (This is just the twisted version of the standard
description for a complex line.)
We can now apply Theorem \ref{poincare} to $\xi =\eta\otimes_{\Cc_\delta }\eta$.
The Euler class of $\xi$ is $e(\xi ) = 2e(\eta )$
(calculated as the twisted first Chern class 
$c_1^\delta (\eta\otimes_{\Cc_\delta} \eta )=2 c_1^\delta (\eta )$).

(d). 
The first assertion can be read off from the exact sequence
$$
0\to H^1(M;\,\Ff_2) \to H^1(M-\Sing ;\,\Ff_2) \to \bigoplus_{x\in \Sing} \Ff_2 \to
\Ff_2=H^2(M;\,\Ff_2)\, .
$$
When $\Sing_-$ is empty, the class $l$ is uniquely determined by $\sigma$
and the result follows as in (a).
\end{proof}

\begin{rem}
In case (b), to be consistent with (a) it must be true that the sum
$\sum_{i=1}^ml^iw_{m-i}(\eta )$ is zero.
Indeed, we have
$l^{2j-1}w_{m-2j+1}(\eta )+l^{2j}w_{2j}(\eta )=0$, because 
$$
w_{m-2j+1}(\eta )= \Sq^1(w_{m-2j}(\eta ))+w_1(\eta )w_{m-2j}(\eta )
$$ 

\smallskip\par\noindent
and $w_1(M)l^{2j-1}w_{m-2j}(\eta )=\Sq^1(l^{2j-1}w_{m-2j}(\eta ))$,
by the formulae of Wu.
\end{rem}
\section{Sections of complex projective bundles}
Similar methods may be used for sections of the complex projective bundle 
$\Cc P(\eta )$ of a complex vector bundle $\eta$.
We shall again write $H$ for the dual Hopf line bundle over $\Cc P(\eta )$
embedded as the tautological subbundle of the pullback of $\eta$.
\begin{prop}\label{cx}
Suppose that $m=2k$ is even, $k>1$, that $M$ is oriented,
and that $\eta$ is a complex $k$-dimensional
vector bundle over $M$.
Suppose that $\sigma$ is a section of the complex projective bundle
$\Cc P(\eta )$ over the complement of a finite set $\Sing$.

Then $\sigma$ determines a class $l\in H^2(M;\,\Zz )$.
One can assign a local index of $\sigma$ in $\Zz$ to each point of $\Sing$, and
the sum of the local indices is equal to 
$(\sum_{i=0}^k (-l)^ic_{k-i}(\eta )[M]$.
\end{prop}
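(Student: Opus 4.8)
The plan is to imitate the proof of parts (a) and (b) of the Proposition of Section~2, working over $\Cc$ in place of $\Rr$. The first step is to extend the complex line bundle $\sigma^*H$ from $M-\Sing$ to all of $M$. Since a complex line bundle on a space $X$ is classified by a class in $H^2(X;\Zz)$, and since the long exact sequence of the pair $(M,M-\Sing)$ together with excision gives $H^i(M,M-\Sing;\Zz)\iso\bigoplus_{x\in\Sing}H^i(\Rr^m,\Rr^m-\{0\};\Zz)$, which vanishes for $i\neq m$, the hypothesis $k>1$ (so $m=2k\geq 4$) makes both $H^2$ and $H^3$ of the pair vanish; hence restriction $H^2(M;\Zz)\to H^2(M-\Sing;\Zz)$ is an isomorphism. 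Therefore $\sigma^*H$ extends, uniquely up to isomorphism, to a complex line bundle $\lambda$ over $M$, and we set $l=c_1(\lambda)\in H^2(M;\Zz)$.

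Next, the defining inclusion $H\into\eta$ over $\Cc P(\eta)$ pulls back by $\sigma$ to a bundle monomorphism $\lambda\into\eta$ over $M-\Sing$, that is, a nowhere-zero section $s$ of the complex $k$-plane bundle $\xi=\lambda^*\otimes_\Cc\eta$ over $M-\Sing$. The local index of $\sigma$ at a point $x\in\Sing$ is declared to be the local index of $s$ there; this depends only on the germ of $\sigma$ at $x$, because over a small sphere around $x$ the line bundle $\sigma^*H$ is classified by an element of $H^2(S^{m-1};\Zz)=0$ and so extends over the disc independently of the global choice of $\lambda$. Since $M$ is oriented and the underlying real $m$-plane bundle of a complex $k$-plane bundle carries its canonical complex orientation, the orientation bundles of $\xi$ and of $M$ are both trivial, so case (b) of Theorem~\ref{poincare} applies and the sum of the local indices equals $e(\xi)[M]=c_k(\lambda^*\otimes_\Cc\eta)[M]\in\Zz$.

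Finally one computes $c_k(\lambda^*\otimes_\Cc\eta)$ by the splitting principle: if $x_1,\dots,x_k$ are the Chern roots of $\eta$, then the Chern roots of $\lambda^*\otimes_\Cc\eta$ are $x_1-l,\dots,x_k-l$, whence
\[
c_k(\lambda^*\otimes_\Cc\eta)=\prod_{j=1}^{k}(x_j-l)=\sum_{i=0}^{k}(-l)^i\,c_{k-i}(\eta),
\]
which is the asserted value. The one step I expect to need genuine care is the unique extension of $\sigma^*H$ across $\Sing$, and this is precisely what forces $k>1$: when $m=2$ the obstruction lives in $H^2(M,M-\Sing;\Zz)=\bigoplus_{x\in\Sing}\Zz$ and need not vanish, so one would have to organize the points of $\Sing$ according to whether $\sigma^*H$ extends locally, in the style of part~(d) of the real case; the remaining ingredients here — the identification of local indices and the Chern-class identity — are routine.
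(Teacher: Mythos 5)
Your proposal is correct and follows the same route as the paper: extend $\sigma^*H$ uniquely to a line bundle $\lambda$ with $c_1(\lambda)=l$ (using $m=2k\geq 4$), view the tautological inclusion $\lambda\into\eta$ as a nowhere-zero section of $\xi=\lambda^*\otimes_\Cc\eta$, apply Theorem \ref{poincare}(b), and compute $c_k(\lambda^*\otimes_\Cc\eta)=\sum_{i=0}^k(-l)^ic_{k-i}(\eta)$. You simply supply more of the routine details (the cohomological vanishing behind the unique extension, the well-definedness of the local index, the splitting-principle calculation) than the paper's terse three-sentence argument does.
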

\begin{proof}
The pullback $\sigma^*H$ extends uniquely to a complex line bundle
$\lambda$ over $M$ with $c_1(\lambda )=l\in H^2(M;\,\Zz )$. The section
$\sigma$ determines a section $s$ of the sphere bundle 
of $\xi =\lambda^*\otimes_\Cc \eta$
to which we can apply Theorem \ref{poincare}.
The Euler class is $e(\xi )=c_k(\lambda^*\otimes_\Cc \eta ) =
\sum (-l)^ic_{k-i}(\eta )$.
\end{proof}
\begin{rem}
More generally, if $M$ is not oriented, one could consider
a $\Cc_\delta$-bundle $\eta$.
One has a $\Cc_\delta$-line bundle $\lambda$ classified by
$l\in H^2(M;\,\Zz (\delta ))$ and twisted Chern classes
$c_j^\delta (\eta )\in H^{2j}(M;\, \Zz (\delta^{\otimes j}))$.
(See, for example, \cite{CCV}.)
\end{rem}

If $l$ is a torsion class, then the sum of the local indices is equal
to $c_k(\eta )[M]=e(\eta )[M]$.
This leads on to consideration of bundles of lens spaces.
\begin{prop}
Suppose that $m=2k$ is even, $k\geq 1$, that $M$ is oriented,
and that $\eta$ is a complex $k$-dimensional vector bundle over $M$.
Fix $n\geq 1$ and consider the bundle of lens spaces
$S(\eta )/\mu_n \to M$, where $\mu_n\leq\Cc^\times$ is the group
of $n$th roots of unity acting on $\eta$ by multiplication.

Suppose that $\sigma$ is a section of $S(\eta )/\mu_n \to M$
over the complement of a finite set $\Sing$.
\par\noindent {\rm (a).}
Suppose that $k>1$. Then one can assign to each point of $\Sing$ a local index of $\sigma$ in
$\Zz$, and the sum of the local indices is equal to $e(\eta )[M]$.
\par\noindent {\rm (b).}
Suppose that $k=1$. Then one can assign to each point of $\Sing$ 
a local index of $\sigma$
in $\Zz$, and the sum of the local indices is equal to
$n\cdot e(\eta )[M]$.
\end{prop}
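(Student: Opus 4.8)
The plan is to mimic the proof of Proposition \ref{cx}, reducing a section of the lens-space bundle $S(\eta)/\mu_n \to M$ to a section of the sphere bundle of a twisted tensor product, and then to invoke Theorem \ref{poincare}. The key point is that the obstruction theory is much simpler than in the projective case: the classifying space of $\mu_n$-bundles that arise here has controlled low-dimensional cohomology, so the line bundle we can extract is essentially canonical up to torsion.

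First I would analyse the bundle map $S(\eta) \to S(\eta)/\mu_n$, which is a principal $\mu_n$-bundle. Pulling it back along $\sigma$ over $M - \Sing$ gives a principal $\mu_n$-bundle on $M - \Sing$, equivalently a complex line bundle $L$ (the associated bundle for the standard character of $\mu_n$) together with a trivialization of $L^{\otimes n}$; here $L$ sits inside $\sigma^*\eta$, the pullback along $\sigma$ of the tautological complex line bundle on $S(\eta)/\mu_n$. Since $\dim M = 2k \geq 2$ and the obstructions to extending a line bundle from $M - \Sing$ to $M$ live in $H^2$, which is unaffected by removing finitely many points, this $L$ extends (uniquely up to isomorphism) to a complex line bundle $\lambda$ over $M$. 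Its first Chern class $l = c_1(\lambda) \in H^2(M;\Zz)$ is a torsion class: indeed $n \cdot l = c_1(\lambda^{\otimes n})$, and $\lambda^{\otimes n}$ restricted to $M - \Sing$ is trivial, hence $n l$ is a class that vanishes off a finite set, so $nl = 0$ in $H^2(M;\Zz)$ (again because $H^2(M;\Zz) \to H^2(M-\Sing;\Zz)$ is injective for $m \geq 3$, and one argues separately but easily when $m = 2$).

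Next, as in Proposition \ref{cx}, the section $\sigma$ tautologically determines a section $s$ of the sphere bundle of $\xi = \lambda^*\otimes_\Cc \eta$ over $M - \Sing$: over a point of $M - \Sing$, $\sigma$ names a $\mu_n$-orbit in $S(\eta)$, which is exactly a unit-norm element of $\mathrm{Hom}_\Cc(\lambda, \eta) = \lambda^* \otimes_\Cc \eta$ up to the (now trivialized) $n$th-root ambiguity. Apply Theorem \ref{poincare}(b) to $s$: since $M$ is oriented and $\xi$ is a complex bundle, the orientation hypothesis is automatic, and the sum of local indices equals $e(\xi)[M] = c_k(\lambda^*\otimes_\Cc \eta)[M] = \bigl(\sum_{i=0}^k (-l)^i c_{k-i}(\eta)\bigr)[M]$. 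Because $l$ is torsion, in part (a) ($k > 1$, so there is a degree-$\geq 4$ slot for $l$ to live in and it still contributes nothing since it is torsion against the fundamental class anyway) every term with $i \geq 1$ is a torsion class paired with $[M]$, hence zero, leaving $c_k(\eta)[M] = e(\eta)[M]$. For part (b), $k = 1$, the formula reads $(c_1(\eta) - l)[M] = c_1(\eta)[M]$ since $l$ is torsion — but this gives $e(\eta)[M]$, not $n \cdot e(\eta)[M]$, so the naive reduction is off by the factor $n$; the discrepancy must be absorbed into the definition of the local index.

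The main obstacle, and the place that needs genuine care, is precisely this factor of $n$ in case (b) — equivalently, pinning down the correct normalization of the local index of a section of the lens-space bundle. When $k = 1$ the fibre $S(\eta_x)/\mu_n$ is a circle, and the covering $S(\eta_x) \to S(\eta_x)/\mu_n$ is the degree-$n$ cover of $S^1$; a section of the lens bundle near a singular point $x$ gives a map $S(\tau_xM) = S^1 \to S(\eta_x)/\mu_n = S^1$, and one must decide whether the local index is the degree of this map or the degree of a lift (which differs by a factor related to $n$). The honest statement is that the local index of $\sigma$ is defined directly via the degree theory for maps into $S(\eta_x)/\mu_n$ — or equivalently, $n$ times the local index of the associated section $s$ of $\xi$ whenever such a lift is locally available — and with that convention the global sum picks up the overall factor $n$ relative to the sphere-bundle count; I would verify this by a direct local computation on $S^1$, checking that passing from $s$ to $\sigma$ multiplies each local index by $n$ and hence multiplies the total $e(\eta)[M]$ by $n$. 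When $k > 1$ the fibre $S(\eta_x)/\mu_n$ is simply connected (a lens space of dimension $2k-1 \geq 3$ with $\pi_1 = \Zz/n$ — wait, it is not simply connected, but $\pi_{2k-1}(S(\eta_x)/\mu_n) \iso \pi_{2k-1}(S^{2k-1}) = \Zz$ via the covering), so the covering $S(\eta_x) \to S(\eta_x)/\mu_n$ induces an isomorphism on $\pi_{2k-1}$ and there is no factor of $n$: the local index of $\sigma$ agrees with that of $s$, giving part (a) directly.
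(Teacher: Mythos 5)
Your treatment of part (a) is essentially the paper's: reduce to a section of the sphere bundle of $\lambda^*\otimes_\Cc\eta$ as in Proposition \ref{cx}, observe that the trivialization of $L^{\otimes n}$ over $M-\Sigma$ forces $nl=0$ (legitimately here, since $m=2k\geq 4$ makes $H^2(M;\Zz)\to H^2(M-\Sigma;\Zz)$ injective), and conclude that only the $i=0$ term of $\sum(-l)^ic_{k-i}(\eta)$ survives against $[M]$. That part is fine.

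Part (b) has a genuine gap. First, your preliminary reduction is not merely ``off by a factor of $n$'' --- it is vacuous for $k=1$: the tautological line bundle over $S(\eta)/\mu_n$ is the pullback of $\eta$ itself, so $L\cong\eta\,|\,(M-\Sigma)$, one may take $\lambda\cong\eta$ and $l=c_1(\eta)$ (which is not torsion in general; your injectivity argument fails for $m=2$ because $H^2(M-\Sigma;\Zz)=0$, so the extension $\lambda$ is not even unique), and $\xi=\eta^*\otimes_\Cc\eta$ is trivial with $e(\xi)=0$. Second, and more importantly, your proposed fix --- normalize the local index as the degree of $\phi_x:S^1\to S(\eta_x)/\mu_n$ and ``verify by a direct local computation'' that the sum is $n\cdot e(\eta)[M]$ --- cannot succeed as stated: a local computation on a circle cannot produce the global sum formula, and a global lift of $\sigma$ to $S(\eta)$ need not exist, so you cannot simply multiply the classical count for $\eta$ by $n$. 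The missing idea is a global one: the map $v\mapsto v^{\otimes n}$ descends to an isomorphism of circle bundles $S(\eta)/\mu_n\cong S(\eta^{\otimes n})$ over $M$. With your chosen normalization of the local index (which agrees with the classical one under this fibrewise homeomorphism), part (b) then becomes exactly Theorem \ref{poincare} applied to the complex line bundle $\eta^{\otimes n}$, whose Euler class is $n\cdot e(\eta)$. This identification is the paper's (one-line) argument, and without it your part (b) remains an assertion rather than a proof.
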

\begin{proof}
(a). This follows from Proposition \ref{cx}, because $nl=0$.

(b). We can identify $S(\eta )/\mu_n$ with the circle bundle
$S(\eta^{\otimes n})$.
The result then follows from Theorem \ref{poincare}, because
$e(\eta^{\otimes n})=n\cdot e(\eta )$.
\end{proof}
\section{The degree of an $n$-valued map}
Following the seminal work of Schirmer \cite{schirmer} and
Brown \cite{brown} on $n$-valued maps,
an $n$-valued map between two compact ENRs $X$ and $Y$ was described in
\cite{nval} by a finite cover $p : \tilde X \to X$ and a continuous map
$f: \tilde X\to Y$. 
We shall call such a pair $(f,p)$, denoted by the fraction $f/p$,
{\it a structured finite-valued map} from $X$ to $Y$.
It determines a multivalued map $F : X\multimap Y$ by $F(x)=f(p^{-1}(x))$.
We are concerned here with the case in which $X$ and $Y$ are smooth manifolds.
\begin{defn}
Given two closed, connected, oriented, smooth manifolds $X$ and $Y$ of the same 
dimension $m$, we define the {\it degree} of a structured 
finite-valued
map $f/p$ from $X$ to $Y$, where $p:\tilde X\to X$ is a finite cover and 
$f:\tilde X \to Y$ is a map, to be the integer $\deg (f/p)\in \Zz$
determined by the identity
$$
f_*[\tilde X] =\deg (f/p)\cdot [Y]
$$
involving the fundamental classes $[\tilde X]$ and $[Y]$ of the oriented
manifolds. (The orientation of $X$ lifts to an orientation of the
finite cover $\tilde X$.)
\end{defn}
\begin{rem}
If $Z$ is another closed, connected, oriented $m$-manifold
and a structured  finite-valued map $g/q$ from $Y$ to $Z$
is given by $q: \tilde Y\to Y$ and $g:\tilde Y\to Z$,
the composition $g/q\circ f/p$ is defined by the cover
$f^*\tilde Y \to \tilde X \to X$ and map $f^*\tilde Y \to \tilde Y \to Z$:
$$
\begin{matrix}
f^*\tilde Y  &&&& \\
\Darr{}{} &\!\!\!\!\! \RDarr{}{}\!\!\!\!\! & && \\
\tilde X&& \tilde Y && \\
\Darr{p}{\phantom{p}} &\!\!\!\!\!\RDarr{f}{\phantom{f}}\!\!\!\!\!& \Darr{q}{\phantom{q}}
 &\!\!\!\!\!\!\RDarr{g}{\phantom{g}}\!\!\!\!\!\!& \\
X && Y && Z
\end{matrix}
$$
The degree is multiplicative:
$\deg (g/q\circ f/p) =\deg (g/q)\cdot \deg (f/p)$.
\end{rem}
\begin{defn}
Let $V$ be a finite dimensional real vector space. 
For an integer $n\geq 1$, we write $C_n^*(V)$
for the configuration space of $n$-element sets of non-zero vectors in $V$.
Its canonical $n$-fold cover is written as $\tilde C_n^*(V)$: the
fibre of $\tilde C_n^*(V) \to C_n^*(V)$ at $A\in C_n^*(V)$ is just the
$n$-element subset $A$, and there is a map $\pi : \tilde C^*_n(V)
\to V-\{ 0\}$ given by the inclusion of $A$ in $V-\{ 0\}$.
\end{defn}
We shall think of a map $\phi  : S(V) \to C^*_n(V)$ from the unit sphere in
$V$ (for some Euclidean inner product) as an $n$-valued map
$$
S(V) \multimap V-\{ 0\}\, .
$$
We have an associated structured finite-valued map specified by 
the $n$-fold cover $p:\tilde X \to X =S(V)$ pulled back by
$\phi$ from $\tilde C^*_n(V) \to C^*_n(V)$ and the map $f:\tilde X \to
S(V)=X$ obtained by composing with $\pi : \tilde C^*_n(V) \to V-\{ 0\}$ and
$v\mapsto v/\| v\| : V-\{ 0\} \to S(V)$.
\begin{defn} \label{degree}
The {\it degree} of $\phi$ is the integer $\deg (\phi)\in\Zz$ constructed as
follows.
For $\dim V >1$, we
define $\deg (\phi )\in\Zz$ to be $\deg (f/p)$ (for either orientation of
$S(V)$).
For $\dim V=1$, we make the {\it ad hoc} definition
$$
\deg (\phi ) = 
\# \{ x\in \tilde X \st p(x)=f(x) \} - n \, ,
$$
so that in this case $-n\leq \deg (\phi )\leq n$.
\end{defn}
\begin{rem}
In general, the degree is related to the Lefschetz index $L(f/p)$, as described
in \cite{nval}, by the identity
$$
\deg (\phi ) = (-1)^{\dim V} (n- L(f/p)).
$$
\end{rem}

\begin{lem}
Suppose that a finite group $G$ of order $n$ acts freely on $S(V)$. Then
there is an inclusion $\iota : S(V)/G \into C_n^*(V)$ 
defined by regarding a $G$-orbit 
in $S(V)$ as an $n$-element subset of $V-\{ 0\}$.
Given a map $\psi : S(V)\to S(V)/G$, the degree of $\phi =\iota\circ\psi$ is
equal to

\smallskip

$\deg (\psi )$ if $\dim V=2\,$;

\smallskip

$n\cdot\deg (\psi )$ if $\dim V >2$ and the action of $G$ is 
orientation-preserving\,;

\smallskip

$0$ if $\dim V >2$ and $G$ does not preserve the orientation. 
\end{lem}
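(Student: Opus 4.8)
The plan is to put the structured finite-valued map $f/p$ of $\phi=\iota\circ\psi$ into a transparent form and then compute its degree, the argument splitting according as $S(V)$ is or is not simply connected. Write $q\colon S(V)\to S(V)/G$ for the orbit map. The point of $\tilde C_n^*(V)$ lying over $Gw\in C_n^*(V)$ is, by definition, an element of the orbit $Gw\subseteq S(V)$, so $w\mapsto(Gw,w)$ identifies $S(V)$ with $\iota^*\tilde C_n^*(V)$ over $S(V)/G$, and under this identification the composite of $\pi$ with the normalisation $v\mapsto v/\|v\|$ is the identity of $S(V)$, since every orbit lies on the unit sphere. Pulling back along $\psi$, the structured map of $\phi$ is therefore the $n$-fold cover $p$ and the second projection $f$ in
$$\tilde X=\{\,(x,w)\in S(V)\times S(V)\st q(w)=\psi(x)\,\},\qquad p(x,w)=x,\qquad f(x,w)=w,$$
so that $q\circ f=\psi\circ p$. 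Fixing an orientation of $S(V)$ and orienting the (possibly disconnected) cover $\tilde X$ by lifting through $p$, we have $p_*[\tilde X]=n\,[S(V)]$.

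Suppose first that $\dim V=2$. Then $S(V)/G$ is a closed connected $1$-manifold, hence a circle, and every non-identity element of $G$ acts freely on $S^1$ and so orientation-preservingly (an orientation-reversing self-homeomorphism of $S^1$ has a fixed point); consequently $S(V)/G$ may be oriented so that $q_*[S(V)]=n\,[S(V)/G]$. Applying $q_*$ to the identity $f_*[\tilde X]=\deg(\phi)\,[S(V)]$ and using $q\circ f=\psi\circ p$ together with $p_*[\tilde X]=n\,[S(V)]$ yields $\deg(\phi)\cdot n\,[S(V)/G]=n\,\deg(\psi)\,[S(V)/G]$, whence $\deg(\phi)=\deg(\psi)$.

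Suppose now that $\dim V>2$, so that $S(V)=S^{m-1}$ is simply connected and $\psi$ lifts through $q$ to a map $\tilde\psi\colon S(V)\to S(V)$; here $\deg(\psi)$ is to be understood as $\deg(\tilde\psi)$, the ordinary mapping degree into $S(V)/G$ being automatically $n$ times larger when $S(V)/G$ is orientable and zero for trivial reasons when it is not. Writing $L_g$ for the action of $g\in G$ on $S(V)$, one obtains an identification $\tilde X\cong\coprod_{g\in G}S(V)$ via $(g,x)\mapsto(x,L_g\tilde\psi(x))$, under which $p$ restricts to the identity on the $g$-th sheet — so the lifted orientation there is the chosen one — while $f$ restricts to $L_g\circ\tilde\psi$. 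Hence
$$f_*[\tilde X]=\sum_{g\in G}\deg(L_g\circ\tilde\psi)\,[S(V)]=\Bigl(\sum_{g\in G}\deg(L_g)\Bigr)\deg(\tilde\psi)\,[S(V)].$$
Since $g\mapsto\deg(L_g)$ is a homomorphism $G\to\{\pm1\}$, the coefficient $\sum_{g\in G}\deg(L_g)$ equals $n$ when this homomorphism is trivial — that is, when the action of $G$ preserves orientation — and equals $0$ otherwise, a non-trivial character of a finite group having vanishing sum. This gives $\deg(\phi)=n\,\deg(\psi)$ in the orientation-preserving case and $\deg(\phi)=0$ in the other.

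The step I expect to be the main obstacle is the first one: pinning down exactly the pulled-back cover $\tilde X$ and the map $f$ (observing that the normalisation contributes nothing), and, above all, keeping the orientation convention straight — namely that $\tilde X$ carries the orientation lifted through $p$, which is precisely what produces $p_*[\tilde X]=n\,[S(V)]$ and hence the factor $n$ in the statement. A secondary, related point is that $\deg(\psi)$ must be read as an honest mapping degree when $\dim V=2$ but as the degree of a lift when $\dim V>2$; this is the real reason the proof divides into the two cases.
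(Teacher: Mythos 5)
Your argument is correct, and it splits against the paper as follows. For $\dim V>2$ you and the paper do essentially the same thing: lift $\psi$ to $\tilde\psi$, identify $\tilde X$ with $n$ copies of $S(V)$ on which $f$ acts as $L_g\circ\tilde\psi$, and sum $\deg(L_g\tilde\psi)$ over $G$, the sum collapsing to $n\deg(\tilde\psi)$ or $0$ according as the character $g\mapsto\deg(L_g)$ is trivial or not (the paper writes $\det(g)$, implicitly taking the action linear; your version via $\deg(L_g)$ is marginally more general, and your explicit bookkeeping of the identification of $\tilde X$ and of the lifted orientation fills in steps the paper leaves tacit). For $\dim V=2$ the routes genuinely diverge: the paper reduces by homotopy invariance to the model $V=\Cc$, $G=\mu_n$, $\psi(z)=z^d$, where $\tilde X=\{z^d=w^n\}$ is a union of $\gcd(d,n)$ circles, and counts the $|d|$ points of $f^{-1}(1)$; you instead push the defining identity $f_*[\tilde X]=\deg(\phi)[S(V)]$ forward along $q$ and use $q\circ f=\psi\circ p$ together with $p_*[\tilde X]=n[S(V)]$ and $q_*[S(V)]=n[S(V)/G]$. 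Your computation is cleaner, avoids the model and the case analysis on $\gcd(d,n)$, and in fact the same pushforward argument would also dispose of the orientation-preserving case in dimension $>2$ uniformly (giving $\deg(\phi)=n\deg(\tilde\psi)$ there as well), which is a small structural improvement. Finally, your explicit caveat that in the lemma's middle case $\deg(\psi)$ must be read as $\deg(\tilde\psi)$ — the honest mapping degree of $\psi$ into the oriented quotient being $n\deg(\tilde\psi)$, since the covering $q$ itself has degree $n$ — is a point the paper elides by simply asserting $\deg(\psi)=\deg(\tilde\psi)$; your reading is the one consistent with the lemma's stated answer and with its use in Section 5, so this is a clarification rather than a disagreement.
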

\begin{proof}
If $\dim V>2$, $S(V)$ is simply connected and $\psi$ lifts to
$\tilde \psi : S(V)\to S(V)$. We have 
$$
\deg (\phi ) =\sum_{g\in G}\deg (g\tilde\psi ) \, .
$$
But $\deg (g\tilde\psi )=\deg (\tilde\psi )$ if $\det (g)=+1$,
$-\deg (\tilde\psi )$ if $\det (g)=-1$.
If $G$ preserves the orientation of $V$, the manifold $S(V)/G$ is oriented,
so that $\deg (\psi )$ is defined and is equal to $\deg (\tilde\psi )$.
If $G$ does not preserve the orientation, then the terms cancel to zero.

If $\dim V=2$, then $G$ is cyclic and orientation-preserving. 
It is enough, by homotopy invariance, to look at the special case
$V=\Cc$, $G=\mu_n$, in which we identify $S(V)/G$ with $S(\Cc )$ by
$[w]\mapsto w^n$ and describe $\psi: S(\Cc )\to S(\Cc )$ by $\psi (z)=z^d$.
Then $\tilde X =\{ (z,w) \in S(\Cc )\times S(\Cc )\st z^d=w^n\}$
is a union of $k\geq 1$ circles, where $k$ is the highest common factor
of $d$ and $n$,
and $f: \tilde X \to X=S(\Cc )$ maps $(z,w)$ to $w$.
The degree is easily checked to be equal $d$, for example, by looking
at the $|d|$ points of $f^{-1}(1)$.
\end{proof}
\section{A Poincar\'e-Hopf theorem for $n$-valued sections of a vector bundle}
\begin{defn}
Given a real vector bundle $\xi$ over $M$, we write
$C^*_n(\xi )\to M$ for the locally trivial fibre bundle with fibre
at $x\in M$ the configuration space $C^*_n(\xi_x)$.
(For further material on fibrewise configuration spaces, see
\cite[(II.14)]{FHT}.)
A section $\sigma$ of $C^*_n(\xi )$ determines an $n$-element subset
of $\xi_x-\{ 0\}$ at each point $x$.
We shall call $\sigma$ an {\it $n$-valued nowhere-zero section} of
the vector bundle $\xi$.
\end{defn}
The main theorem is now a direct generalization and consequence of
the classical Poincar\'e-Hopf theorem.
\begin{thm} \label{main}
Suppose that $\xi$ is an $m$-dimensional real vector bundle, $m\geq 1$,
over a closed $m$-manifold $M$ and suppose that, for some $n\geq 1$,
$\sigma$ is an $n$-valued nowhere-zero section of $\xi$ over the complement
of a finite set $\Sing$.
\par\noindent {\rm (a).}
One can assign to each point of $\Sing$ a local index 
of $\sigma$ in $\Zz /2\Zz$,
and the sum of the local indices is equal to $n\cdot w_m(\xi )[M]\in\Zz /2\Zz$.
\par\noindent {\rm (b).}
Suppose that $w_1(\xi )=w_1(M)$ and an isomorphism is chosen
between $\delta$ and the orientation bundle of $\xi$.
Then one can assign to each point of $\Sing$ a local index 
of $\sigma$ in $\Zz$,
and the sum of the local indices is equal to 
$n\cdot e(\xi )[M]\in\Zz$.
\end{thm}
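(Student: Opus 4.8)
The plan is to reduce the statement to the classical Poincar\'e-Hopf theorem (Theorem \ref{poincare}), in the spirit of the Propositions of Sections 2 and 3, the single new ingredient being the degree theory for $n$-valued maps of Section 4. First I would make the local index precise. Fix a Riemannian metric on $M$ and, for each $x\in\Sing$, a tubular neighbourhood $D(\tau_xM)\into M$ meeting $\Sing$ only in $x$, together with a trivialization $\xi\,|\,D(\tau_xM)\iso D(\tau_xM)\times\xi_x$ which is the identity at $x$. Then $\sigma$, restricted to the bounding sphere, becomes a map $\phi_x:S(\tau_xM)\to C^*_n(\xi_x)$, i.e. an $n$-valued map $S(\tau_xM)\multimap\xi_x-\{0\}$; identifying $S(\tau_xM)$ with $S(\xi_x)$ as oriented spheres in case (b) via the chosen correspondence between the orientations of $\tau_xM$ and $\xi_x$ (exactly as in the discussion following Theorem \ref{poincare}), one defines the local index at $x$ to be $\deg(\phi_x)$ of Definition \ref{degree}, reduced mod $2$ in case (a). Since the homotopy class of $\phi_x$ is independent of all the choices --- the space of choices being connected once the orientation correspondence is fixed --- well-definedness follows from the homotopy invariance of the degree.

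For the global count I would excise small open discs about the points of $\Sing$, obtaining a compact $m$-manifold $M_0$ with $\partial M_0=\bigsqcup_x S(\tau_xM)$ over which $\sigma$ is defined, and pull back along $\sigma$ the canonical $n$-fold cover of $C^*_n(\xi)$ to get an $n$-fold cover $p_0:\tilde M_0\to M_0$ equipped with a tautological nowhere-zero section $\tilde s$ of $p_0^*\xi$ (a point of $\tilde M_0$ over $y$ being a pair $(y,v)$ with $v\in\sigma(y)$, sent by $\tilde s$ to $v$); write $\bar s=\tilde s/\|\tilde s\|$ for the associated section of $S(p_0^*\xi)$. Over $S^{(x)}:=p_0^{-1}(S(\tau_xM))$ the bundle $p_0^*\xi$ is trivialized, pulled back from $D(\tau_xM)$, and in this trivialization $\bar s$ restricts to the structured map $f_x/p_x$ underlying $\phi_x$. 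Each connected component $S$ of $S^{(x)}$ is a connected covering of $S^{m-1}$ --- equal to $S^{m-1}$ when $m\geq 3$, a cyclic covering of $S^1$ when $m=2$ --- hence is diffeomorphic to $S^{m-1}=\partial D^m$ and bounds a disc over which $p_0$ extends as a covering of $D(\tau_xM)$, branched at the centre when $m=2$, of degree equal to its number of sheets; these numbers of sheets sum to $n$ over each $x$. Gluing in one such disc per component produces a closed $m$-manifold $\hat M$ together with a map $\hat p:\hat M\to M$ satisfying $\hat p_*[\hat M]=n[M]$, and a generic extension of $\bar s$ over the new cells is a section $\hat s$ of $\hat p^*\xi$, nowhere zero outside a finite set, whose zeros have index sum $\sum_{x\in\Sing}\deg(\phi_x)$: the disc capping a component $S$ contributes $\deg(\bar s|_S)$, and $\sum_S\deg(\bar s|_S)=\deg(f_x/p_x)$ because the fundamental class of $S^{(x)}$ is the sum of those of its components.

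Applying Theorem \ref{poincare} to $(\hat M,\hat p^*\xi,\hat s)$ --- checking in case (b) that $w_1(\hat p^*\xi)=w_1(\hat M)$ and that the isomorphism between $\delta$ and the orientation bundle of $\xi$ pulls back to one over $\hat M$ --- then identifies $\sum_{x\in\Sing}\deg(\phi_x)$ with $w_m(\hat p^*\xi)[\hat M]=\langle w_m(\xi),\hat p_*[\hat M]\rangle=n\cdot w_m(\xi)[M]$ in case (a), and with $e(\hat p^*\xi)[\hat M]=\langle e(\xi),\hat p_*[\hat M]\rangle=n\cdot e(\xi)[M]$ in case (b), which is the assertion. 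One may bypass the closed manifold $\hat M$ altogether: the relative Euler class of $\xi$ over $M_0$ with respect to the boundary trivialization pairs with $[M_0,\partial M_0]$ to give $e(\xi)[M]$ by Theorem \ref{poincare}, its pullback along $p_0$ pairs with $[\tilde M_0,\partial\tilde M_0]$ to give $n$ times that, and the latter pairing equals $\sum_x\deg(\phi_x)$ from the description of $\bar s$ over $\partial\tilde M_0$ (and similarly for $w_m$ in case (a)). The degenerate case $m=1$ is handled directly from the \emph{ad hoc} clause of Definition \ref{degree}: over each arc of $M_0$ the section $\sigma$ splits into $n$ single-valued branches of constant sign, $\deg(\phi_x)$ is the difference of the numbers of positive branches on the two sides of $x$, and summing round the circle telescopes to $0=n\cdot e(\xi)[M]$ in case (b) (where $\xi$ is necessarily trivial) and, after accounting for the monodromy of $\xi$, to $n\cdot w_1(\xi)[M]$ in case (a).

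The remainder being a routine assembly of standard facts, the one genuinely delicate point is the bookkeeping of orientations and $\delta$-twisted coefficients in part (b): $M$ need not be orientable, so one works throughout with twisted fundamental classes, and one must check that these behave correctly under the cover $p_0$ and under the capping construction, that $\hat M$ inherits the twisting data needed to apply Theorem \ref{poincare}(b), and that the orientation correspondence underlying the definition of $\deg(\phi_x)$ is globally compatible with the chosen isomorphism between $\delta$ and the orientation bundle of $\xi$.
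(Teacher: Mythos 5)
Your argument is essentially the paper's own proof: you build the same $n$-fold cover of $M-\Sing$ pulled back from $\tilde C^*_n(\xi)\to C^*_n(\xi)$, cap its boundary spheres with discs (branched, via $z\mapsto z^{r_i}$, when $m=2$), and apply Theorem \ref{poincare} on the resulting closed manifold together with the degree-$n$ push-forward identity $\hat p_*[\hat M]=n[M]$. The only cosmetic difference is that you extend the section generically over the capping discs and count its zeros there, whereas the paper keeps point singularities at the disc centres and reads off the index identity directly from Definition \ref{degree}; both give the same count.
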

The proof follows (but is, perhaps, a little simpler than)
the argument used in \cite[Theorem 1.1]{CG}. 
\begin{proof}
The local index at a point $x\in \Sing$ is defined just as in 
Theorem \ref{poincare}.
Taking a small disc $D(\tau_xM)$ in $M$ centred at $x$ and a trivialization of the restriction
$\xi \, | \, D(\tau_x M)\iso D(\tau_x M)\times\xi_x$,
we obtain from $\sigma$ a map $\phi_x : S(\tau_x M) \to C^*_n(\xi_x)$.
The local index is the degree of $\phi_x$, reduced (mod $2$) in case (a).

We shall construct a closed manifold $\tilde M$, a 
continuous map $p: \tilde M \to M$
such that 
(i) the orientation bundle of $\tilde M$ is the pullback
$p^*\delta$ of the orientation bundle of $M$, 
(ii) $\tilde \Sing=p^{-1}(\Sing )$ is finite
and (iii) $p\, |\, : \tilde M -\tilde \Sing \to M-\Sing$ is an $n$-fold covering
space, and a nowhere-zero section $s$ of $p^*\xi \, |\, \tilde M-\tilde \Sing$ 
with the property that the local index of $\sigma$ at $x\in \Sing$ is equal
to the sum of the local indices of $s$ at the points of $p^{-1}(x)$.

The result then follows by applying Theorem \ref{poincare} to
$s$ and using the identity
$$
e(p^*\xi )[\tilde M] = p_!(p^*e(\xi ))[M] = n\cdot e(\xi )[M],
$$
depending on the fact that $p$ has degree (defined using the
isomorphism between the orientation bundle of $\tilde M$ and
$p^*\delta$) equal to $n$ (that is, $p_!(1)=n$).

To construct $\tilde M$, we choose disjoint discs
$D(\tau_xM)$ about the points $x\in \Sing$ and
let $W$ be the complement of the union of
open unit discs $B(\tau_x M)$ in $M$. 
It is a manifold with boundary the union of the spheres $S(\tau_x M)$.
The pullback by $\sigma$ of the $n$-fold cover $\tilde C^*_n(\xi )\to
C^*_n(\xi )$ is an $n$-fold cover $p : \tilde W \to W$.

Let us look at a particular $x\in \Sing$ and follow the notation
used in Definition \ref{degree} with $V=\tau_x M$. 
The map $p$ restricts over $X=S(V)$ to a cover $\tilde X \to X$ by
a union of spheres.
If $m\not=2$, this bundle is trivial, say $S(V)\times I \to S(V)$,
where $I$ is a finite set of cardinality $\# I=n$. 
(When $n=1$, there is a natural trivialization using
the order, given by a choice of orientation, in $\xi_x$.)
We close the singularity by gluing in the $n$ discs $D(V)\times I\to D(V)$.
If $m=2$, we can identify $V$ with $\Cc$ and describe $\tilde X \to X$
as $S(\Cc ) \times I \to S(\Cc )$, where $p(z,i)=z^{r_i}$, for positive
integers $r_i$ indexed by the finite set of components $I$.
The singularity is closed by gluing in $D(\Cc )\times I \to D(\Cc ):
(z,i)\mapsto z^{r_i}$.

Following this procedure for each $x\in \Sing$, we produce the required  manifold
$\tilde M$ and map $p:\tilde M\to M$ extending the finite cover
$\tilde W \to W$ to satisfy the conditions (i)-(iii).
The section $\sigma$ on $W$ produces a nowhere-zero
section $s$ of $p^*\xi$ on $\tilde W$,
and the identification of the local index of $\sigma$ at $x$ with the
sum of the local indices of $s$ at the $\# I$ points of $p^{-1}(x)$
is intrinsic to the Definition \ref{degree} of the degree.
\end{proof}

\begin{ex}
Suppose that $G$ is a finite group of order $n$ and that
$\xi$ is a $G$-vector bundle, over the trivial $G$-space $M$,
with $G$ acting freely on $S(\xi )$.
We have an embedding $S(\xi )/G \into C^*_n(\xi )$.
So a section of $S(\xi )/G$ on the complement of a finite set $\Sing$
determines an $n$-valued nowhere-zero section of $\xi$ outside $\Sing$.
This construction thus includes the examples: 

\smallskip

\quad $P(\eta )$ where $\eta$ is a real vector bundle,
from Section 2;

\smallskip

\quad $S(\eta )/\mu_n$ where $\eta$ is a complex vector bundle,
from Section 3;

\smallskip

\par\noindent
and the case $S(\eta )/G$ where $\eta$ is a quaternionic vector bundle
and $G$ is a subgroup of $\Sp (1)$, which does not seem to be tractable
by the methods of Sections 2 and 3.
\end{ex}
\begin{ex}
Suppose that $s_1,\ldots ,\, s_n$ are $n$ sections of $S(\xi )$.
Then we can construct a section $\sigma$
of $C^*_n(\xi )$ by choosing $n$ distinct
positive real numbers $t_i>0$ and setting $\sigma (x)=\{ t_1s_1(x),\,\ldots ,\,
t_ns_n(x)\}$.

This observation shows that, for $m>1$, the local index of an $n$-valued
section may take any integer value (and justifies the
concluding Remark 4.2 in \cite{CG}).
\end{ex}

\par\noindent {\bf Acknowledgment.}
I am grateful to R.~F.~Brown for introducing me to the concept of an $n$-valued map and for helpful comments on an earlier (2017)
version of this paper.
\end{document}